\newlist{steps}{enumerate}{1}
\setlist[steps, 1]{label = Step \arabic*:}
\DeclareRobustCommand\widecheck[1]{{\mathpalette\@widecheck{#1}}}
\def\@widecheck#1#2{%
   \setbox\z@\hbox{\m@th$#1#2$}%
   \setbox\tw@\hbox{\m@th$#1%
      {%
         \vrule\@width\z@\@height\ht\z@
         \vrule\@height\z@\@width\wd\z@}$}%
   \dp\tw@-\ht\z@
   \@tempdima\ht\z@ \advance\@tempdima2\ht\tw@ \divide\@tempdima\thr@@
   \setbox\tw@\hbox{%
      \raise\@tempdima\hbox{\scalebox{1}[-1]{\lower\@tempdima\box\tw@}}}%
   {\ooalign{\box\tw@ \cr \box\z@}}}
\theoremstyle{plain}
\newtheorem{thm}{Theorem}[section]
\crefname{thm}{Theorem}{Theorems}
\Crefname{thm}{Theorem}{Theorems}
\crefname{prop}{Proposition}{Propositions}
\Crefname{prop}{Proposition}{Propositions}
\crefname{lem}{Lemma}{Lemmas}
\Crefname{lem}{Lemma}{Lemmas}
\newtheorem{cor}[thm]{Corollary}
\crefname{cor}{Corollary}{Corollaries}
\Crefname{cor}{Corollary}{Corollaries}
\crefname{claim}{Claim}{Claims}
\Crefname{claim}{Claim}{Claims}
\crefname{property}{Property}{Properties}
\Crefname{property}{Property}{Properties}
\crefname{problem}{Problem}{Problems}
\Crefname{problem}{Problem}{Problems}
\crefname{conjecture}{Conjecture}{Conjecture}
\Crefname{conjecture}{Conjecture}{Conjecture}
\theoremstyle{definition}
\crefname{defn}{Definition}{Definitions}
\Crefname{defn}{Definition}{Definitions}
\crefname{notation}{Notation}{Notations}
\Crefname{notation}{Notation}{Notations}
\crefname{convention}{Convention}{Conventions}
\Crefname{convention}{Convention}{Conventions}
\crefname{cond}{Condition}{Conditions}
\Crefname{cond}{Condition}{Conditions}
\crefname{assum}{Assumption}{Assumptions}
\Crefname{assum}{Assumption}{Assumptions}
\crefname{conj}{Conjecture}{Conjectures}
\Crefname{conj}{Conjecture}{Conjectures}
\crefname{claim1}{Claim}{Claims}
\Crefname{claim1}{Claim}{Claims}
\Crefname{ques}{Question}{Question}
\crefname{ques}{Question}{Question}
\theoremstyle{remark}
\newtheorem{rem}[thm]{Remark}
\crefname{rem}{Remark}{Remarks}
\Crefname{rem}{Remark}{Remarks}
\newtheorem{ex}[thm]{Example}
\crefname{ex}{Example}{Examples}
\Crefname{ex}{Example}{Examples}
\crefname{section}{Section}{Sections}
\Crefname{section}{Section}{Sections}
\crefname{subsection}{Subsection}{Subsections}
\Crefname{subsection}{Subsection}{Subsections}
\crefname{figure}{Figure}{Figures}
\Crefname{figure}{Figure}{Figures}
\newtheorem*{acknowledgement}{Acknowledgement}
\newcommand{\Z}{\mathbb{Z}}
\newcommand{\Diff}{\mathrm{Diff}}
\newcommand{\Homeo}{\mathrm{Homeo}}
\newcommand{\del}{\partial}
\newcommand{\rank}{\mathop{\mathrm{rank}}\nolimits}
\newcommand{\id}{\mathrm{id}}
\def\ker{\operatorname{Ker}}
\def\rank{\operatorname{rank}}
\def\id{\operatorname{id}}
\newcommand{\mbar}[1]{{\ooalign{\hfil#1\hfil\crcr\raise.167ex\hbox{--}}}}
    \def\HMt{%
       \setbox0=\hbox{$\widehat{\mathit{HM}}$}
       \setbox1=\hbox{$\mathit{HM}$}
       \dimen0=1.1\ht0
       \advance\dimen0 by 1.17\ht1
       \smash{\mskip2mu\raise\dimen0\rlap{%
          \begin{turn}{180}
              {$\widehat{\phantom{\mathit{HM}}}$}
           \end{turn}} \mskip-2mu    
                \mathit{HM}
                    }{\vphantom{\widehat{\mathit{HM}}}}{}}
\title{On absolutely exotic diffeomorphisms of 4-manifolds}
\author{Hokuto Konno}
\address{Graduate School of Mathematical Sciences, the University of Tokyo, 3-8-1 Komaba, Meguro, Tokyo 153-8914, Japan}
\email{konno@ms.u-tokyo.ac.jp}
\author{Abhishek Mallick}
\address{Department of Mathematics, Dartmouth College, 29 N Main St, Hanover, New Hampshire, 03755, USA}
\email{abhishek.mallick@dartmouth.edu}
\author{Masaki Taniguchi} 
\address{Department of Mathematics, Graduate School of Science, Kyoto University, Kitashirakawa Oiwake-cho, Sakyo-ku, Kyoto 606-8502, Japan}
\email{taniguchi.masaki.7m@kyoto-u.ac.jp}
\begin{document}

\begin{abstract}
We prove that there exist infinitely many contractible compact smooth $4$-manifolds $C$ that admit absolutely exotic diffeomorphisms of infinite order in $\pi_0(\Diff(C))$.
By ``absolutely", we mean that isotopies are not required to be relative to the boundary.
This follows from a theorem that produces absolutely exotic diffeomorphisms from relatively exotic diffeomorphisms, analogous to a theorem of Akbulut and Ruberman that produces absolutely exotic 4-manifolds from relatively exotic 4-manifolds.
\end{abstract}

\maketitle

\section{Main results}
\label{section Introduction}


Given a smooth manifold $W$ with boundary, we denote by $\Diff(W)$ the group of diffeomorphisms of $W$.
It has a natural subgroup $\Diff_{\del}(W)$, consisting of diffeomorphisms that are equal to the identity in a neighborhood of $\del W$.
Similarly, we define the absolute and relative homeomorphism groups, denoted by $\Homeo(W)$ and $\Homeo_{\del}(W)$.
We say that $f \in \Diff(W)$ is an \emph{absolutely exotic diffeomorphism} if $f$ represents a non-trivial element of 
\[
\ker\bigl(\pi_0(\Diff(W)) \to \pi_0(\Homeo(W))\bigr),
\]
and that $f \in \Diff_\del(W)$ is a \emph{relatively exotic diffeomorphism} if $f$ represents a non-trivial element of 
\[
\ker\bigl(\pi_0(\Diff_{\del}(W)) \to \pi_0(\Homeo_{\del}(W))\bigr).
\]

Recently, relatively exotic diffeomorphisms of contractible compact $4$-manifolds have been actively studied using families gauge theory \cites{konno-mallick-taniguchi-Dehn-twist,krushkal2024corksexoticdiffeomorphisms,konno2024localizinggroupsexoticdiffeomorphisms,KLMME1,KangParkTaniguchi-Dehn-twist,shivkumar2025exoticmathbbr4scompactlysupported}, most of which can be easily seen to be not absolutely exotic.
In addition, employing a different technique, Gabai--Gay--Hartman \cite{gabai2025pseudoisotopydiffeomorphisms4spherei} announced that $D^4$ admits a relatively exotic diffeomorphism of order~$2$ in the mapping class group, which would provide the first confirmed example of an absolutely exotic diffeomorphism of a contractible $4$-manifold (see Remark~\ref{rem:GGH}).

In this short note, we prove that there exist many contractible compact $4$-manifolds admitting absolutely exotic diffeomorphisms of infinite order:

\begin{thm}
\label{thm contractible}
There exist infinitely many contractible compact smooth $4$-manifolds $C$ that admit absolutely exotic diffeomorphisms of infinite order in $\pi_0(\Diff(C))$.
\end{thm}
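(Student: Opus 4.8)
The plan is to deduce Theorem~\ref{thm contractible} from a general ``absolute from relative'' principle, in the spirit of Akbulut--Ruberman's upgrade of relatively exotic $4$-manifolds to absolutely exotic ones. First I would record that the families gauge-theoretic constructions cited above provide a contractible compact smooth $4$-manifold $W$, with integral homology sphere boundary $Y=\partial W$, carrying a diffeomorphism $f\in\Diff_\partial(W)$ that represents a class of infinite order in $\ker(\pi_0(\Diff_\partial(W))\to\pi_0(\Homeo_\partial(W)))$. The goal is then to attach to $W$ a ``rigid cap'' $Z$ along $Y$, so that $C:=W\cup_Y Z$ is again contractible and the diffeomorphism $\widehat f:=f\cup\id_Z$ — well defined because $f$ is the identity near $Y$ — represents a class of infinite order in $\ker(\pi_0(\Diff(C))\to\pi_0(\Homeo(C)))$, i.e.\ an absolutely exotic diffeomorphism of infinite order.

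For the enlargement I would build $Z$ from an invertible homology cobordism from $Y$ to a suitably chosen $3$-manifold $Y'$, following Akbulut--Ruberman, arranging two properties: (a) $C$ is contractible; and (b) $W$ is \emph{rigidly embedded} in $C$, in the sense that every self-diffeomorphism of $C$ is smoothly isotopic to one carrying $W$ to $W$ and equal to the identity near $Y$, so that extension by the identity induces an injection of the relevant classes $\pi_0(\Diff_\partial(W))\to\pi_0(\Diff(C))$. Granting (b), infinite order of $\widehat f$ follows: if $\widehat f^{\,k}$ were smoothly isotopic to $\id_C$ for some $k\neq 0$, the isotopy could be deformed (using rigidity) to preserve $W$ and be the identity near $Y$, hence would restrict to a smooth isotopy rel $\partial$ from $f^{\,k}$ to $\id_W$, contradicting the infinite order of $f$. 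On the other hand $\widehat f$, and each of its powers, is topologically isotopic to $\id_C$: extend a topological isotopy rel $\partial$ from $f$ to $\id_W$ by the constant isotopy over $Z$. This gives a single absolutely exotic diffeomorphism of infinite order on one contractible compact smooth $4$-manifold $C_0$.

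To produce infinitely many such manifolds I would pass to the boundary connected sums $C_0^{\natural m}$, which remain contractible, whose boundaries $\#^{m}\partial C_0$ are pairwise non-homeomorphic (hence the $C_0^{\natural m}$ are pairwise non-diffeomorphic) once $\partial C_0$ is irreducible and not $S^3$ — which holds for the gauge-theoretic inputs, whose boundaries are typically Brieskorn spheres — and on which $\widehat f$, extended by the identity over the new summands, is still absolutely exotic of infinite order, the detecting family invariant being insensitive to the added summands. (Alternatively, the cited works already supply infinitely many non-diffeomorphic inputs $W$ with distinct boundaries, and applying the enlargement with caps having distinct $Y'$ gives another route.)

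The main obstacle is property (b): constructing a cap $Z$ that simultaneously keeps $C$ contractible and makes $W\subset C$ rigidly embedded. This is exactly where the Akbulut--Ruberman mechanism is needed — one chooses the invertible cobordism, and the $3$-manifold $Y'$, so that $3$-manifold topology (uniqueness up to isotopy of the relevant separating hypersurface, via incompressibility or geometrization-type input, or a $\pi_1$-argument) forces any diffeomorphism of $C$ to respect the splitting along $Y$ up to isotopy, and so that the part over $Z$ can be straightened to the identity. A secondary, more technical difficulty is to verify that the invariant detecting $f$ depends only on its class in $\pi_0(\Diff_\partial(W))$ and transforms correctly under the gluing $W\rightsquigarrow C$ and under boundary connected sum; this is the point at which the passage from ``relative'' to ``absolute'' genuinely bites, since an absolute isotopy of $\widehat f$ is a priori allowed to move $\partial W$, and ruling this out is precisely what the rigidity in (b) buys us.
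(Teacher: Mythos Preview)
Your overall plan---attach an Akbulut--Ruberman invertible homology cobordism $X$ to $W$ along $Y=\partial W$, set $V=W\cup_Y X$, and show that $f\cup\id_X$ is absolutely exotic of infinite order---is exactly the paper's. The gap is in your mechanism for property~(b). You propose rigidity of the \emph{internal} hypersurface $Y\subset V$: every diffeomorphism of $V$ is isotopic to one preserving $W$ and equal to the identity near $Y$. But even granting this $\pi_0$ statement, your deduction fails: the endpoints $\widehat f^{\,k}$ and $\id_V$ of your isotopy already lie in the subgroup fixing a neighborhood of $Y$, and deforming the \emph{path} between them into that subgroup rel endpoints is a $\pi_1$ statement that pointwise rigidity does not supply. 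The paper sidesteps this entirely by ignoring $Y$ and working at the \emph{boundary} $N=\partial V$. The Akbulut--Ruberman $N$ is Haken and not Seifert fibered (built from hyperbolic pieces along incompressible tori), so by Hatcher $\pi_n(\Diff(N))=0$ for all $n>0$. Feeding $\pi_1(\Diff(N))=0$ into the fibration $\Diff_\partial(V)\to\Diff(V)\to\Diff(N)$ gives injectivity of $i_*:\pi_0(\Diff_\partial(V))\to\pi_0(\Diff(V))$ directly; invertibility of $X$ (cap with $X'$ to recover $W$) then shows $[f^k\cup\id_X]\ne 0$ in $\pi_0(\Diff_\partial(V))$, hence in $\pi_0(\Diff(V))$. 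This replaces your (b) cleanly.

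Your boundary-connected-sum route to infinitely many examples inherits the same problem: $\partial(C_0^{\natural m})=\#^m N$ is reducible, so Hatcher's input no longer applies, and you give no substitute for (b) there; ``the detecting family invariant being insensitive to the added summands'' addresses only relative exotica. Your parenthetical alternative---vary the input $W$---is what the paper actually does: take contractible $W_n$ with Seifert-fibered boundary (e.g.\ a Casson--Harer subfamily with $\rank_{\mathbb{F}} HF^{\mathrm{red}}(\partial W_n)\to\infty$), apply the construction to each, and use that the invertible cobordism $X_n$ induces an injection $HF^{\mathrm{red}}(\partial W_n)\hookrightarrow HF^{\mathrm{red}}(\partial V_n)$ to distinguish the resulting $V_n$ by their boundaries.
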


More precisely, we construct infinitely many such mutually non-diffeomorphic manifolds $C$, distinguished by the Heegaard Floer homologies of their boundaries.
The boundaries of those $C$ we shall give are Haken, hence irreducible.

\cref{thm contractible} follows from the following general result, which produces absolutely exotic diffeomorphisms from relatively exotic diffeomorphisms, analogous to a theorem of Akbulut--Ruberman \cite{AR16} that constructs absolutely exotic $4$-manifolds from relatively exotic $4$-manifolds:

\begin{thm}
\label{thm upgrade}
Let $W$ be a compact smooth $4$-manifold with boundary.
Then $W$ admits a smooth codimension-0 embedding into a compact smooth $4$-manifold $V$ with boundary, homotopy equivalent to $W$, with the following property:
if $f \in \Diff_{\del}(W)$ is a relatively exotic diffeomorphism of $W$, then its extension to $V$ by the identity is an absolutely exotic diffeomorphism of $V$.
\end{thm}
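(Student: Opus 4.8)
The plan is to carry out, at the level of diffeomorphism groups, the Akbulut--Ruberman upgrade from the relative to the absolute category. Write $Y=\del W$. First I would invoke the $3$-manifold input underlying \cite{AR16}: for any such $Y$ (working component by component if $\del W$ is disconnected) there is a compact smooth cobordism $Z$ from $Y$ to a hyperbolic $3$-manifold $Y'$ such that (i) $Z$ is \emph{invertible}, i.e.\ there is a cobordism $\bar Z$ from $Y'$ to $Y$ with $Z\cup_{Y'}\bar Z\cong Y\times[0,1]$ rel $Y\times\{0\}$, and (ii) the inclusion $Y=Y\times\{0\}\hookrightarrow Z$ is a homotopy equivalence. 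Property (ii) is easy to arrange directly: build $Z$ from $Y\times[0,1]$ by attaching a $1$-handle and then a $2$-handle running over it algebraically once, so that up to homotopy the two cells cancel; property (i) is precisely what later permits detaching $Z$, and $Y'$ is made hyperbolic using Myers' hyperbolic knots together with Thurston's hyperbolic Dehn surgery theorem. Set $V:=W\cup_Y Z$, so $\del V=Y'$, and by (ii) and a Mayer--Vietoris/van Kampen argument the codimension-$0$ inclusion $W\hookrightarrow V$ is a homotopy equivalence. Given a relatively exotic $f\in\Diff_\del(W)$, let $\hat f\in\Diff(V)$ be $f$ on $W$ and the identity on $Z$; since $f$ is the identity near $Y$ this is a well-defined diffeomorphism, and it is the identity near $\del V$, so in fact $\hat f\in\Diff_\del(V)$.

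The topological assertion is immediate: $f$ is topologically isotopic to $\id_W$ rel $\del W$, and extending that isotopy by the identity on $Z$ exhibits $\hat f$ as topologically isotopic to $\id_V$, so $\hat f\in\ker(\pi_0(\Diff(V))\to\pi_0(\Homeo(V)))$. For the smooth assertion I would show $\hat f$ is not smoothly isotopic to $\id_V$ using two soft facts. \emph{Fact 1}: the restriction map fits into a fibration sequence $\Diff_\del(V)\to\Diff(V)\to\mathcal E(\del V)$, where $\mathcal E(\del V)\subset\Diff(\del V)$ is the (open and closed) subgroup of extendable diffeomorphisms, whose identity component is $\Diff_0(Y')$; since $Y'$ is hyperbolic, Gabai's generalized Smale conjecture gives $\Diff_0(Y')\simeq\{\ast\}$, so $\pi_1(\mathcal E(\del V))=0$ and the long exact sequence of the fibration shows that $\pi_0(\Diff_\del(V))\to\pi_0(\Diff(V))$ has trivial kernel. \emph{Fact 2}: extending by $\id_Z$ and then by $\id_{\bar Z}$ realizes, through the identification $V\cup_{Y'}\bar Z=W\cup_Y(Z\cup_{Y'}\bar Z)\cong W\cup_Y(Y\times[0,1])\cong W$ coming from (i), a composite $\pi_0(\Diff_\del(W))\to\pi_0(\Diff_\del(V))\to\pi_0(\Diff_\del(W))$ equal to the identity (absorbing the product collar $Y\times[0,1]$, on which both diffeomorphisms are already the identity, does nothing up to isotopy rel boundary), so $\pi_0(\Diff_\del(W))\to\pi_0(\Diff_\del(V))$ is injective. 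Now if $\hat f$ were smoothly isotopic to $\id_V$, Fact 1 would promote this to a smooth isotopy rel $\del V$, i.e.\ $[\hat f]=0$ in $\pi_0(\Diff_\del(V))$; since $\hat f$ is the extension of $f$ by $\id_Z$, Fact 2 would then force $f$ to be smoothly isotopic to $\id_W$ rel $\del W$, contradicting the relative exoticness of $f$. Hence $\hat f$ represents a nontrivial element of $\ker(\pi_0(\Diff(V))\to\pi_0(\Homeo(V)))$.

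The step I expect to be the main obstacle is the $3$-manifold input of the first paragraph: producing an invertible cobordism $Z$ from $Y$ to a hyperbolic $Y'$ for which $Y\hookrightarrow Z$ is a homotopy equivalence — invertibility being a genuine restriction on $Z$ — though this is essentially (a mild variant of) what Akbulut--Ruberman already construct, so I would import it rather than reprove it. Everything after that is formal, and it is worth noting that the argument uses only the \emph{$\pi_1$-rigidity} of the new boundary (contractibility of $\Diff_0(Y')$) and no control on $\pi_0(\Diff(Y'))$; this is what lets the passage to the absolute category go through for diffeomorphisms as cleanly as the corresponding passage for $4$-manifolds in \cite{AR16}.
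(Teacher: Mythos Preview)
Your proof is correct and follows essentially the same route as the paper: use the Akbulut--Ruberman invertible cobordism $X$ to build $V=W\cup X$, use invertibility to show $\pi_0(\Diff_\del(W))\to\pi_0(\Diff_\del(V))$ is injective, and use a fibration sequence together with $\pi_1(\Diff(\del V))=0$ to show $\pi_0(\Diff_\del(V))\to\pi_0(\Diff(V))$ is injective. One small correction: the Akbulut--Ruberman boundary $N=\del V$ is not itself hyperbolic but is obtained by gluing hyperbolic pieces along incompressible tori, so it is Haken and non--Seifert-fibered; the paper therefore invokes Hatcher's theorem that $\Diff_0$ of such a manifold is contractible rather than Gabai's result for hyperbolic $3$-manifolds, but the conclusion you need ($\pi_1(\Diff(N))=0$) is the same.
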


In \cite{krushkal2024corksexoticdiffeomorphisms,KLMME1,KangParkTaniguchi-Dehn-twist}, it was proven that certain contractible compact $4$-manifolds $W$ admit a relatively exotic diffeomorphism $f$ of infinite order in $\pi_0(\Diff_\del(W))$.
Applying \cref{thm upgrade} to powers of $f$ then shows that there exists a contractible $4$-manifold $C$ admitting an absolutely exotic diffeomorphism of infinite order in $\pi_0(\Diff(C))$.
We shall see how to obtain infinitely many such $C$.

\begin{rem}
Absolutely exotic diffeomorphisms of other $4$-manifolds with boundary have already been studied in \cite[Theorem~1.4]{IKMT22} using the families Kronheimer--Mrowka invariant, which differs from the technique employed in this paper. The smallest example that can be obtained from \cite[Theorem~1.4]{IKMT22} has $b_2 = 4$.
\end{rem}

Using (the proof of) \cref{thm upgrade}, we can easily upgrade results on relatively exotic diffeomorphisms to results on absolutely exotic diffeomorphisms. 
For example, from a theorem of Lin \cite{JL20}, we have:

\begin{cor}
\label{cor: stabilization}
There exists a simply-connected, compact, smooth $4$-manifold with boundary that admits an absolutely exotic diffeomorphism which survives after taking the connected sum with a single copy of $S^2 \times S^2$.
\end{cor}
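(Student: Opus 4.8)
The plan is to feed Lin's ``survives one stabilization'' result into \cref{thm upgrade}, the only genuine point being that one must invoke \emph{the proof of} \cref{thm upgrade} in a way compatible with the $S^2\times S^2$-stabilization.

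First I would record the input from \cite{JL20}: there is a simply-connected compact smooth $4$-manifold $W$ with boundary (one may take $W$ to be a once-punctured $K3$ surface, so that $\del W = S^3$) together with a relatively exotic diffeomorphism $f \in \Diff_\del(W)$ -- a boundary Dehn twist -- whose stabilization $f \# \id_{S^2 \times S^2} \in \Diff_\del(W \# (S^2 \times S^2))$ is again relatively exotic. This last assertion is exactly Lin's theorem that the Dehn twist along the connect-sum sphere of $K3 \# K3$ remains non-isotopic to the identity after one stabilization: that Dehn twist is the extension of $f \# \id$ by the identity over the complementary once-punctured $K3$, so smooth triviality rel $\del$ of $f \# \id$ would force smooth triviality of the closed Dehn twist, a contradiction; topological triviality rel $\del$ is classical.

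Next I would apply \cref{thm upgrade} to $W$: it yields a codimension-$0$ embedding $W \hookrightarrow V$ with $V$ compact and homotopy equivalent to $W$ -- hence simply connected -- such that the extension $\widehat f \in \Diff(V)$ of $f$ by the identity is an absolutely exotic diffeomorphism. To see that $\widehat f$ survives one stabilization, I would apply \cref{thm upgrade} a second time, now to $W \# (S^2 \times S^2)$ together with the relatively exotic diffeomorphism $f \# \id$. Here one uses the shape of the construction behind \cref{thm upgrade}: $V$ is built from $W$ by gluing on a fixed piece supported in a collar of $\del W$ and depending only on $\del W$, whereas the connected sum with $S^2 \times S^2$ is carried out in the interior of $W$. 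Consequently the manifold produced by \cref{thm upgrade} for $W \# (S^2 \times S^2)$ is canonically identified with $V \# (S^2 \times S^2)$, and under this identification the extension of $f \# \id$ by the identity is precisely $\widehat f \# \id_{S^2 \times S^2}$. By \cref{thm upgrade} this diffeomorphism is absolutely exotic, which proves the corollary with the manifold $V$ and the diffeomorphism $\widehat f$.

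The one step I expect to be the real content is the naturality claim just used -- that the upgrade construction commutes with interior connected sum with $S^2 \times S^2$, sending the extension of $f$ to the extension of $f \# \id$. This is precisely why the paragraph preceding the statement refers to \emph{the proof of} \cref{thm upgrade} rather than its statement, and it should follow immediately once the collar-supported nature of the construction is made explicit. Everything else in the argument is formal.
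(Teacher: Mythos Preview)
Your proposal is correct and matches the paper's proof essentially line for line: both take $W$ to be the punctured $K3$, invoke Lin's theorem that the boundary Dehn twist of $W\#(S^2\times S^2)$ is relatively exotic, and then use that the Akbulut--Ruberman cobordism $X$ attached in the proof of \cref{thm upgrade} depends only on $\del W$, so that $(W\#(S^2\times S^2))\cup_{\del W} X = V\#(S^2\times S^2)$ and the extended diffeomorphism is $\widehat f\#\id$. Your ``naturality'' paragraph is exactly the content the paper flags by writing $V\#S^2\times S^2 = (W\#S^2\times S^2)\cup_{\del W} X$.
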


We shall also discuss higher homotopy groups of absolute diffeomorphism groups in \cref{Section: Higher homotopy groups}. 
\section{Akbulut--Ruberman cobordisms}

\cref{thm upgrade} follows from a key construction of homology cobordisms due to Akbulut and Ruberman~\cite{AR16}, which yields absolutely exotic 4-manifolds from relatively exotic 4-manifolds, such as corks.
What we need is a slight analysis of their construction, together with an input from $3$-dimensional topology, which can be summarized as follows; it provides a comparison result between the relative and absolute diffeomorphism groups.
To state this, let 
\[
i : \Diff_{\del}(W) \hookrightarrow \Diff(W)
\]
denote the natural inclusion map.
We say that a smooth oriented cobordism $X$ from $M$ to $N$ is \emph{invertible} if there exists a smooth oriented cobordism $X'$ from $N$ to $M$ such that $X \cup_{N} X'$ is diffeomorphic to $M \times [0,1]$.
We observe that the invertible homology cobordism constructed by Akbulut and Ruberman Akbulut--Ruberman~\cite[Section 2]{AR16}  possesses the following remarkable property:

\begin{thm}
\label{main theorem}
Let $W$ be a compact smooth $4$-manifold with boundary. 
Then there exists an invertible homology cobordism $X$ from $\del W$ to a $3$-manifold $N$ such that the $4$-manifold 
\[
V = W \cup_{\del W} X
\]
satisfies the following property:

\begin{itemize}
\item $V$ is homotopy equivalent to $W$.
\item $i_\ast : \pi_0(\Diff_\del(V)) \to 
\pi_0(\Diff(V))$ is injective.
\item $i_\ast : \pi_n(\Diff_\del(V)) \to \pi_n(\Diff(V))$ is isomorphic for $n>0$.
\end{itemize}
\end{thm}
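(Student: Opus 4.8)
The plan is to deduce the three bullet points from the homotopy long exact sequence of the boundary-restriction fibration, once the far end $N$ of the cobordism has been chosen so that $\Diff(N)_0$ is weakly contractible. First I would recall the Akbulut--Ruberman construction \cite{AR16}: starting from $M=\del W$ it produces an \emph{invertible} homology cobordism $X$ from $M$ to a closed $3$-manifold $N$, and --- this is where an input from $3$-dimensional topology enters --- the construction leaves enough freedom (if necessary after concatenating with a further invertible homology cobordism) to take $N$ to be Haken, or even hyperbolic, while keeping $\pi_1(M)\xrightarrow{\cong}\pi_1(X)$. Set $V=W\cup_{\del W}X$, so that $\del V=N$ and $W\subset V$ is a codimension-$0$ submanifold.

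For the first bullet, $V\simeq W$: since $X$ is a homology cobordism, excision gives $H_\ast(V,W)\cong H_\ast(X,M)=0$; since $\pi_1(M)\xrightarrow{\cong}\pi_1(X)$, van Kampen gives $\pi_1(W)\xrightarrow{\cong}\pi_1(V)$; Whitehead's theorem then shows $W\hookrightarrow V$ is a homotopy equivalence.

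For the last two bullets, consider the restriction homomorphism $r\colon\Diff(V)\to\Diff(N)$. By the isotopy extension theorem it is a fibration onto an open subgroup $\cD\subseteq\Diff(N)$ (the diffeomorphisms of $N$ that extend over $V$), and up to homotopy its fiber is $\Diff_\del(V)$, giving a fibration $\Diff_\del(V)\to\Diff(V)\xrightarrow{r}\cD$. Here is where the geometric input is used: by the Smale conjecture in dimension $3$ --- Gabai's theorem when $N$ is hyperbolic, or Hatcher's theorem when $N$ is Haken --- the identity component $\Diff(N)_0$ is contractible, so $\pi_k(\cD)=\pi_k(\Diff(N))=0$ for all $k\ge 1$. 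Plugging this into
\[
\cdots\to\pi_{n+1}(\cD)\xrightarrow{\;\del\;}\pi_n(\Diff_\del(V))\xrightarrow{\;i_\ast\;}\pi_n(\Diff(V))\xrightarrow{\;r_\ast\;}\pi_n(\cD)\to\cdots
\]
yields $\pi_{n+1}(\cD)=\pi_n(\cD)=0$ for $n\ge 1$, hence $i_\ast$ is an isomorphism on $\pi_n$ for $n>0$; and $\pi_1(\cD)=\pi_1(\Diff(N)_0)=0$, hence $\ker\bigl(i_\ast\colon\pi_0(\Diff_\del(V))\to\pi_0(\Diff(V))\bigr)=\im(\del)=0$, giving injectivity on $\pi_0$.

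The invertibility of $X$ plays no direct role in the three bullet points, but it is recorded because it is precisely what lets $V$ inherit the relative exoticness of $W$ as an \emph{absolute} phenomenon, via the argument of \cite{AR16}, which is what \cref{thm upgrade} needs. Accordingly, the main obstacle is the $3$-dimensional topology input in the first paragraph: one must check that the Akbulut--Ruberman cobordism can be arranged (or post-composed with an invertible homology cobordism) so that $N$ is Haken --- hence, by Hatcher's theorem, has contractible $\Diff(N)_0$ --- without destroying the isomorphism $\pi_1(\del W)\cong\pi_1(X)$ needed for $V\simeq W$. Everything after that is formal manipulation of the fibration sequence above together with the invocation of the Smale conjecture.
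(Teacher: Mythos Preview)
Your approach is essentially the paper's: the Akbulut--Ruberman invertible homology cobordism, a $3$-manifold input guaranteeing $\pi_k(\Diff(N))=0$ for $k\ge 1$, and the long exact sequence of the restriction fibration $\Diff_\del(V)\to\Diff(V)\to\Diff(N)$. One point to tighten: Hatcher's theorem for Haken $N$ yields contractible $\Diff(N)_0$ only when $N$ is \emph{not} Seifert fibered; the paper secures this by observing that the Akbulut--Ruberman $N$ is assembled from hyperbolic pieces along incompressible tori, hence has hyperbolic JSJ pieces and cannot be a graph (in particular Seifert) manifold --- so no post-composition with a further cobordism is needed.
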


\begin{proof}
Given $W$, Akbulut--Ruberman~\cite[Proof of Theorem A]{AR16} constructed a certain invertible homology cobordism $X$ from $M=\del W$ to a 3-manifold $N$ for which $V=W\cup_M X$ is homotopy equivalent to $W$.
(See \cite[Lemma 3.8]{yasui2025corksexotic4manifoldsgenus} for a detailed proof of the homotopy equivalence.)
As mentioned in \cite[Proof of Theorem~6.1]{AR16}, the $3$-manifold $N$ is obtained by gluing hyperbolic $3$-manifolds along incompressible tori, and hence is a Haken manifold.
Recall that a graph manifold is characterized as an irreducible $3$-manifold whose JSJ decomposition has no hyperbolic pieces.
Thus, $N$ is not a graph manifold, and in particular it is not a Seifert fibered $3$-manifold.
However, in general, a Haken manifold $Y$ that is not Seifert fibered satisfies that all higher homotopy groups of $\Diff(Y)$ vanish 
(see \cite[Theorem~2]{hatcher1999spacesincompressiblesurfaces}). Hence,
\begin{align}
\label{eq: higher vanish}
\pi_n(\Diff(N)) = 0 \quad \text{for all } n > 0.
\end{align}

On the other hand, the inclusion map $i : \Diff_{\del}(V) \hookrightarrow \Diff(V)$ fits into the exact sequence
\begin{align}
\label{eq: exact seq}
1 \longrightarrow \Diff_{\del}(V) \xrightarrow{i} \Diff(V) \xrightarrow{r} \Diff(N),
\end{align}
where $r : \Diff(V) \to \Diff(N)$ is the restriction map to the boundary, which is surjective onto a union of connected components.
This exact seqence gives rise to long exact sequences of homotopy groups, whereas the very final map on $\pi_0$ need not be surjective.
The remaining assertions of \cref{main theorem} follow from these long exact sequences together with \eqref{eq: higher vanish}.
\end{proof}

\section{From relative to absolute}

We now prove the results stated in \cref{section Introduction}.

\begin{proof}[Proof of \cref{thm upgrade}]
From \cref{main theorem}, we obtain an invertible homology cobordism $X$ from $M=\del W$ to some 3-manifold $N$ such that \[
i_\ast : \pi_0(\Diff_\del(V)) \to \pi_0(\Diff(V))
\]
is injective for
$V = W \cup_{\del W} X$.
By the invertibility of $X$, there exists a cobordism $X'$ from $N$ to $M$ such that $X \cup_{N} X'$ is diffeomorphic to $M \times [0,1]$.
Fix a diffeomorphism 
\[
X \cup_{N} X' \cong M \times [0,1],
\]
which induces a diffeomorphism
\begin{align}
\label{eq: identification}
V \cup_N X' = W \cup_M X \cup_N X' \cong W
\end{align}

Via the diffeomorphism \eqref{eq: identification}, the diffeomorphism
\[
f \cup \id_{X} \cup \id_{X'} \in \Diff_\del(W \cup_M X \cup_N X')
\]
corresponds to $f \in \Diff_\del(W)$.
Hence, the nontriviality of the relative mapping class 
$[f] \in \pi_0(\Diff_\del(W))$ implies that 
$[f \cup \id_{X}]$ is nontrivial in $\pi_0(\Diff_\del(V))$.
Since 
$i_\ast : \pi_0(\Diff_\del(V)) \to \pi_0(\Diff(V))$ 
is injective by \cref{main theorem}, 
it follows that $[f \cup \id_{X}]$ is nontrivial in 
$\pi_0(\Diff(V))$.

On the other hand, since the topological mapping class $[f]_{\mathrm{top}}$ is trivial in 
$\pi_0(\Homeo_\del(W))$,  
$[f \cup \id_{X}]_{\mathrm{top}}$ is trivial in 
$\pi_0(\Homeo(V))$.
Therefore, $f \cup \id_{X}$ is an absolutely exotic 
diffeomorphism of $V$.
\end{proof}

\begin{proof}[Proof of \cref{thm contractible}]
The first ingredient is the theorem of Kang, Park, and the third author \cite{KangParkTaniguchi-Dehn-twist}, which states that for any contractible compact smooth $4$-manifold $W$ whose boundary is a Seifert-fibered $3$-manifold, the boundary Dehn twist $\tau \in \Diff_\partial(W)$ is a relatively exotic diffeomorphism and has infinite order in $\pi_0(\Diff_\partial(W))$.
Let $V$ be the $4$-manifold given by \cref{thm upgrade}, which is homotopy equivalent to $W$ and contains $W$ as an embedded submanifold.
Then $V$ is contractible, and we see that the extension of $\tau$ to $V$ is an absolutely exotic diffeomorphism of $V$ of infinite order in $\pi_0(\Diff(V))$ by applying \cref{thm upgrade} to powers of $\tau$.

Infinitely many examples of such $V$, distinguished by the Floer homologies of $\partial V$, can be obtained as follows.
Let $W_n$ be a sequence of contractible, compact smooth $4$-manifolds whose boundaries are Seifert-fibered $3$-manifolds, and suppose that the ranks of their reduced Heegaard Floer homologies satisfy
\begin{align}
\label{eq: rank goes off to the infinity}
\rank_{\mathbb{F}} H\!F^{\mathrm{red}}(\partial W_n) \to +\infty
\end{align}
as $n \to +\infty$, where we consider Floer homology with $\mathbb{F}=\mathbb{F}_2$-coefficients.
To be specific, consider the Casson--Harer family, a well-known sequence of Brieskorn homology $3$-spheres that bound contractible smooth $4$-manifolds.  
If we choose a certain subfamily $\{Y_n\}_n$ of the Casson--Harer family and take contractible smooth $4$-manifolds $W_n$ bounded by $Y_n$, the family $\{W_n\}_n$ satisfies the assumption \eqref{eq: rank goes off to the infinity}.
For instance, 
\[
Y_n=\Sigma(2n,2n-1,2n+1)
\]
is such an example (see \cite[Table~2]{akbulut-karakurt-Mazur-manifolds}) for $n \geq 2$. 

Let $V_n$ be the $4$-manifold, homotopy equivalent to $W_n$, given by \cref{thm upgrade}.
After passing to a subsequence if necessary, we see that the $V_n$ are pairwise distinct, and, as noted above, each $V_n$ admits an absolutely exotic diffeomorphism of infinite order.

To show that $\{V_n\}_n$ contains a pairwise distinct subfamily, it suffices to prove that
\begin{align}
\label{eq: rank diverges}
\rank_{\mathbb{F}} H\!F^{\mathrm{red}}(\partial V_n) \to  +\infty
\end{align}
as $n \to +\infty$.
This follows immediately from the inequality
\begin{align}
\label{eq: rank ineq}
\rank_{\mathbb{F}} H\!F^{\mathrm{red}}(\partial W_n) \leq \rank_{\mathbb{F}} H\!F^{\mathrm{red}}(\partial V_n)
\end{align}
together with \eqref{eq: rank goes off to the infinity}. The inequality \eqref{eq: rank ineq} is established as follows.

Recall from the proof of \cref{thm upgrade} that each $V_n$ is obtained as
\[
V_n = W_n \cup_{\partial W_n} X_n,
\]
where $X_n$ is an invertible homology cobordism for each $n$.
Let $X_n'$ be a smooth cobordism from $\del V_n$ to $\del W_n$ such that $X_n \cup_{\del V_n} X_n'$ is diffeomorphic to $\del W_n \times [0,1]$.
Then the cobordism map 
\[
H\!F^{+}(X_n \cup_{\del V_n} X_n') : H\!F^{+}(\del W_n) \to H\!F^{+}(\del W_n) 
\]
in the plus version of Heegaard Floer homology is the identity map. 
Hence, 
\[
H\!F^{+}(X_n) : H\!F^{+}(\del W_n) \to H\!F^{+}(\del V_n)
\]
is injective.
Since the reduced Floer homology is characterized as the $U$-torsion part of the plus Floer homology, and the cobordism map $H\!F^{+}(X_n)$ is $U$-equivariant, it follows that the restriction of $H\!F^{+}(X_n)$ defines a well-defined, injective homomorphism
\[
H\!F^{\mathrm{red}}(X_n) : H\!F^{\mathrm{red}}(\partial W_n) \to H\!F^{\mathrm{red}}(\partial V_n),
\]
which verifies \eqref{eq: rank ineq}.
\end{proof}

\begin{rem}\label{rem:GGH}
Gabai--Gay--Hartman \cite{gabai2025pseudoisotopydiffeomorphisms4spherei} announced that some diffeomorphism $f \in \Diff_\del(D^4)$ (of order at most~$2$ in the mapping class group) is non-trivial in $\pi_0(\Diff_\del(D^4))$, and hence relatively exotic by topological triviality via the Alexander trick.
We remark that the relative exotica of $f$ immediately implies that $f$ is an absolutely exotic diffeomorphism of $D^4$.
Indeed, an exact sequence analogous to \eqref{eq: exact seq} for $V=D^4$ induces an exact sequence
\[
\pi_1(\Diff(S^3)) \xrightarrow{\del_\ast} \pi_0(\Diff_\del(D^4))
\xrightarrow{i_\ast} \pi_0(\Diff(D^4)).
\]
The image of the generator of $\pi_1(\Diff(S^3)) \cong \Z/2$ under $\del_\ast$ is the boundary Dehn twist of $D^4$, which can be seen to be trivial in $\pi_0(\Diff_\del(D^4))$ by using the standard circle action on $D^4$.
Hence $i_\ast$ is injective, so the diffeomorphism $f$ is non-trivial in $\pi_0(\Diff(D^4))$.

In contrast, most other previously known relatively exotic diffeomorphisms of compact contractible $4$-manifolds $C$ \cites{konno-mallick-taniguchi-Dehn-twist,KLMME1,KangParkTaniguchi-Dehn-twist} are boundary Dehn twists of $4$-manifolds with Seifert boundaries, and are therefore obviously trivial in $\pi_0(\Diff(C))$.
\end{rem}

It is worth further examining the Gabai-Gay-Hartman example \cite{gabai2025pseudoisotopydiffeomorphisms4spherei}. 
If we apply the proof of \cref{thm upgrade} to their example $f \in \Diff_\partial(D^4)$ discussed in Remark~\ref{rem:GGH}, we obtain:

\begin{rem}
Let $W$ be a compact, oriented, smooth $4$-manifold such that there exists a compact, oriented, smooth $4$-manifold $W'$ with $\partial W \cong -\partial W'$ and $W \cup_{\partial W} W'$ is diffeomorphic to $S^4$. 
Then $W$ admits a relatively exotic diffeomorphism of order 2 in $\pi_0(\Diff_\del(W))$. 
Moreover, if $\pi_1(\Diff(\partial W)) = 0$, then $W$ admits an absolutely exotic diffeomorphism of order 2 in $\pi_0(\Diff_\del(W))$.

To see this, fix a smooth embedding $D^4 \hookrightarrow W$ away from $\partial W$. 
Via this embedding, consider the extension to $W$ of the diffeomorphism $f \in \Diff_\partial(D^4)$ that is announced to be non-trivial in \cite{gabai2025pseudoisotopydiffeomorphisms4spherei}.
We can further extend $f$ to $W \cup_{\partial W} W'$. Since $W \cup_{\partial W} W'$ is diffeomorphic to $S^4$, the non-triviality of $f$ in $\pi_0(\Diff_\partial(D^4))$ (and hence in $\pi_0(\Diff(S^4))$) implies that $f$ is non-trivial in $\pi_0(\Diff_\partial(W))$, yielding an order-2 relatively exotic diffeomorphism. 
Moreover, if $\pi_1(\Diff(\partial W)) = 0$, then, as in the proof of \cref{thm upgrade}, the map $\pi_1(\Diff_\partial(W)) \to \pi_1(\Diff(W))$ is injective, so that $f$ is an order-2 absolutely exotic diffeomorphism of $W$.
\end{rem}

\begin{proof}[Proof of Corollary~\ref{cor: stabilization}]
Let $W$ be the punctured $K3$ surface $K3 \setminus \operatorname{int} D^4$. 
Lin \cite{JL20} proved that the boundary Dehn twist of $W\#S^2\times S^2$ is a relatively exotic diffeomorphism.
The proof of \cref{thm upgrade} provides an invertible homology cobordism $X$ from $S^3$ to a homology 3-sphere $N$ with $\pi_1(\Diff(N))=0$.
As in the proof of \cref{thm upgrade}, the invertibility of $X$ and that$\pi_1(\Diff(N)) = 0$ imply that the extension of the Dehn twist to $V = W \cup_{\partial W} X$ is an absolutely exotic diffeomorphism of $V$, following from the fact that the boundary Dehn twist on $W$ is relatively exotic.
The same argument with the relative exotica of the boundary Dehn twist on $W\#S^2\times S^2$ shows that the extension of the Dehn twist to $V\#S^2\times S^2 = (W\#S^2\times S^2) \cup_{\partial W} X$ is an absolutely exotic diffeomorphism. This completes the proof.
\end{proof}

Note that any relative mapping class on simply connected 4-manifolds with connected boundary with $b_1=0$ is known to be killed by finitely many stabilizations \cite{OrsonPowell2022}. Thus the above diffeomorphism is also killed by finitely many stabilizations.

\section{Higher homotopy groups}\label{Section: Higher homotopy groups}

So far, we have focused on $\pi_0(\Diff)$, but this can be easily generalized to $\pi_n(\Diff)$ for any $n \ge 0$.
Given a smooth manifold $W$ with boundary, we say that a homotopy class $\gamma \in \pi_n(\Diff(W))$ is \emph{absolutely exotic} if $\gamma$ is a non-trivial element of 
\[
\ker\bigl(\pi_n(\Diff(W)) \to \pi_n(\Homeo(W))\bigr),
\]
and that $\gamma \in \pi_n(\Diff_\del(W))$ is \emph{relatively exotic} if $\gamma$ is a non-trivial element of 
\[
\ker\bigl(\pi_n(\Diff_{\del}(W)) \to \pi_n(\Homeo_{\del}(W))\bigr).
\]
If $W$ is embedded into a smooth manifold $V$ as a codimension-$0$ submanifold, then a family of diffeomorphisms $\{f_b\}_{b \in S^{n}} \subset \Diff_\del(W)$ representing $\gamma$ can be extended to $V$ by the identity outside $W$, yielding an element of $\pi_n(\Diff(V))$.
With this preparation, \cref{thm upgrade} can be generalized as follows:

\begin{thm}
\label{thm upgrade higher}
Let $W$ be a compact smooth $4$-manifold with boundary and let $n \geq 0$.
Then $W$ admits a smooth codimension-0 embedding into a compact smooth $4$-manifold $V$ with boundary, homotopy equivalent to $W$, with the following property:
if a homotopy class $\gamma \in \pi_n(\Diff_{\del}(W))$ is relatively exotic, then its extension to $V$ by the identity lying in $\pi_n(\Diff(V))$ is absolutely exotic. 
\end{thm}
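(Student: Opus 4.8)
The plan is to run the argument of Theorem \ref{thm upgrade} essentially verbatim, but now track the full long exact sequences of homotopy groups rather than just $\pi_0$. First I would invoke Theorem \ref{main theorem}, which already records more than what Theorem \ref{thm upgrade} used: it gives an invertible homology cobordism $X$ from $M = \partial W$ to a Haken, non-Seifert-fibered $3$-manifold $N$, such that $V = W \cup_M X$ is homotopy equivalent to $W$, and such that $i_\ast : \pi_n(\Diff_\del(V)) \to \pi_n(\Diff(V))$ is injective for $n = 0$ and an isomorphism for $n > 0$. In either case $i_\ast$ is injective on $\pi_n$ for all $n \ge 0$, which is the only consequence I need. (The key point behind Theorem \ref{main theorem} is that $\pi_n(\Diff(N)) = 0$ for $n > 0$ by Hatcher's theorem on diffeomorphism groups of Haken manifolds, combined with the long exact sequence of the fibration-like restriction map $r : \Diff(V) \to \Diff(N)$.)

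Next I would set up the identification exactly as in the proof of Theorem \ref{thm upgrade}: by invertibility of $X$, fix a cobordism $X'$ from $N$ to $M$ and a diffeomorphism $X \cup_N X' \cong M \times [0,1]$, which induces a diffeomorphism
\[
V \cup_N X' = W \cup_M X \cup_N X' \cong W.
\]
Given a family $\{f_b\}_{b \in S^n} \subset \Diff_\del(W)$ representing a relatively exotic class $\gamma \in \pi_n(\Diff_\del(W))$, extend each $f_b$ by the identity on $X$ and on $X'$; via the diffeomorphism above this recovers the original family on $W$. Hence the class $\gamma_V \in \pi_n(\Diff_\del(V))$ represented by $\{f_b \cup \id_X\}_{b \in S^n}$ maps to $\gamma$ under the map $\pi_n(\Diff_\del(V)) \to \pi_n(\Diff_\del(W \cup_M X \cup_N X')) \cong \pi_n(\Diff_\del(W))$ induced by extension-by-identity composed with the identification \eqref{eq: identification}. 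Since $\gamma \ne 0$, we get $\gamma_V \ne 0$ in $\pi_n(\Diff_\del(V))$; applying injectivity of $i_\ast$ from Theorem \ref{main theorem}, the image of $\gamma_V$ in $\pi_n(\Diff(V))$ — which is precisely the extension-by-identity of $\gamma$ to $V$ — is nontrivial.

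It remains to see that this extended class is topologically trivial. Since $\gamma$ is relatively exotic, its image in $\pi_n(\Homeo_\del(W))$ is trivial; extension-by-identity is functorial for homeomorphisms as well, so the image of $\gamma_V$ in $\pi_n(\Homeo_\del(V))$ is trivial, and composing with $\pi_n(\Homeo_\del(V)) \to \pi_n(\Homeo(V))$ shows the extension to $V$ is trivial in $\pi_n(\Homeo(V))$. Therefore the extension of $\gamma$ to $V$ is an absolutely exotic element of $\pi_n(\Diff(V))$. The only genuinely nontrivial ingredient is Theorem \ref{main theorem} (hence Hatcher's vanishing result for Haken manifolds); the rest is bookkeeping with extension-by-identity maps, and the main thing to be careful about is checking that the identification \eqref{eq: identification} carries the family $\{f_b \cup \id_X \cup \id_{X'}\}$ back to $\{f_b\}$ at the level of $\pi_n$, not just for a single diffeomorphism — but this is immediate since the identification is a fixed diffeomorphism applied fiberwise.
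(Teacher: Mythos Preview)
Your proposal is correct and follows essentially the same approach as the paper: the paper's proof simply says to rerun the proof of Theorem~\ref{thm upgrade} with $f$, $\pi_0(\Diff)$, $\pi_0(\Homeo)$ replaced by $\{f_b\}_{b\in S^n}$, $\pi_n(\Diff)$, $\pi_n(\Homeo)$, and you have carried this out in detail. Your added remark that the identification \eqref{eq: identification} is a fixed diffeomorphism and hence works fiberwise on families is exactly the point needed to justify the passage from single diffeomorphisms to $S^n$-families.
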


\begin{proof}
This follows from the proof of \cref{thm upgrade}, with $f$, $\pi_0(\Diff)$, and $\pi_0(\Homeo)$ replaced by $\{f_b\}_{b \in S^n}$, $\pi_n(\Diff)$, and $\pi_n(\Homeo)$, respectively, where $\{f_b\}_{b \in S^n}$ is a family of diffeomorphisms representing $\gamma$.
\end{proof}

Using \cref{thm upgrade higher}, we can produce many examples of $4$-manifolds that support absolutely exotic homotopy classes in $\pi_n(\Diff)$ while remaining rather small compared with $n$.

\begin{ex}
\label{ex:AucklyRuberman}
Given $n \ge 0$, Auckly--Ruberman~\cite{auckly2025familiesdiffeomorphismsembeddingspositive} constructed an exotic homotopy class $\gamma \in \pi_n(\Diff(Z))$ for some closed $4$-manifold $Z$. 
For example, one may take
\[
Z = K3 \# (n+1)(S^2 \times S^2),
\]
and their construction arises from the fact that the $K3$ surface and its logarithmic transforms are exotic.
By their construction of $\gamma$, it is straightforward to see that $\gamma$ can be represented by a family of diffeomorphisms $\{f_b\}_{b \in S^n} \subset \Diff(Z)$ whose supports are contained in 
\[
W = N(2) \# (n+1)(S^2 \times S^2),
\]
where $N(2)$ denotes the Gompf nucleus.
Regard each $f_b$ as a relative diffeomorphism of $W$, so that $\gamma \in \pi_n(\Diff_\del(W))$. 
Then the non-triviality of $\gamma$ in $\pi_n(\Diff(Z))$ immediately implies that $\gamma$ is non-trivial in $\pi_n(\Diff_\del(W))$.
On the other hand, since the triviality of $\gamma$ in $\pi_n(\Homeo(Z))$ follows from the fact that $N(2)$ and its logarithmic transforms are homeomorphic, it follows that $\gamma$ is trivial in $\pi_n(\Homeo_\del(W))$.
Thus $\gamma \in \pi_n(\Diff_\del(W))$ is relatively exotic.

Let $V$ be the $4$-manifold given by \cref{thm upgrade higher}, which is homotopy equivalent to $W$ and contains $W$ as an embedded submanifold.
Then $\gamma$ extends to an element of $\pi_n(\Diff(V))$ that is absolutely exotic.
\end{ex}

\begin{rem}
Enterprising readers can further reduce the support of the aforementioned diffeomorphisms from $W$ to $C \# (n+1) S^2 \times S^2$, where $C$ is an embedded contractible manifold in $Z$, following the work in \cite{konno2024localizinggroupsexoticdiffeomorphisms} by the first and the second authors.
\end{rem}

\begin{rem}
For higher homotopy groups, another example can be obtained from Watanabe's result \cite{Wa18} on the rational higher homotopy group of the diffeomorphism group of $S^4$, without using our technique. 

 In \cite{Wa18}, he provided infinite order elements in $\pi_i (\operatorname{Diff}(D^4, \partial))$ for certain positive $i$'s. By using the fibration \eqref{eq: exact seq} applied for $D^4$
and the affirmative answer to the Smale conjecture
\[
\Diff^+(S^3) \cong SO(4)
\] by Hatcher \cite{hatcher1983proof}, 
since the higher homotopy groups of $SO(4)$ are finite, sufficiently high powers of Watanabe's elements in $\pi_i (\operatorname{Diff}_\partial(D^4))$ survive in $\pi_i (\operatorname{Diff}(D^4))$, which are trivial in $\pi_i (\operatorname{Homeo}_\partial(D^4))$. In particular, this shows that the kernel of the natural map
\[
\pi_i (\operatorname{Diff}(D^4)) \to \pi_i (\operatorname{Homeo}(D^4))
\]
is not finite for certain positive $i$'s.
\end{rem}

\begin{acknowledgement}
We express our gratitude to Tetsuya Ito for answering some questions. 
HK was partially supported by JSPS KAKENHI Grant Numbers 25K00908, 25H00586. MT was partially supported by JSPS KAKENHI Grant Number 22K13921. HK and MT also thank AIM SQuaRE for its support.
\end{acknowledgement}

\bibliographystyle{plain}
\bibliography{tex}
\end{document}